\declaretheorem[name=Theorem]{theorem}
\declaretheorem[name=Lemma, sibling=theorem]{lemma}
\declaretheorem[name=Corollary, sibling=theorem]{corollary}
\declaretheorem[name=Conjecture, sibling=theorem]{conjecture}
\newcommand{\equivgraph}{$1_{k}$}
\title{On Modular Edge Colourings of Graphs}
\author[1]{Gaétan Berthe}
\author[2]{Marthe Bonamy}
\author[3]{Fábio Botler}
\author[4]{Gaia Carenini}
\author[3]{Lucas Colucci}
\author[5]{Arthur Dumas}
\author[6]{Fatemeh Ghasemi}
\author[3]{Pedro Mariano Viana Neto}
\affil[1]{Université Clermont Auvergne, CNRS, Clermont Auvergne INP, Mines Saint-Étienne, LIMOS, 63000 Clermont-Ferrand, France.}
\affil[2]{CNRS, LaBRI, Université de Bordeaux, Bordeaux, France.}
\affil[3]{Instituto de Matemática e Estatística, Universidade de São Paulo,
    Brazil.}
\affil[4]{DPMMS, University of Cambridge, Cambridge, United Kingdom.}
\affil[5]{Univ Lyon, CNRS, INSA Lyon, UCBL, Centrale Lyon, Univ Lyon 2, LIRIS, UMR5205, F-69622 Villeurbanne, France.}
\affil[6]{LACL, Université Paris-Est Créteil, France.}
\date{\today}
\begin{document}

\maketitle

\begin{abstract}
Given a graph $G$ and an integer $k\geq 2$, let $\chi'_k(G)$ denote the minimum number of colours required to colour the edges of $G$ such that, in each colour class, the subgraph induced by the edges of that colour has all non-zero degrees congruent to $1$ modulo $k$. In 1992, Pyber 
  proved that \(\chi'_2(G) \leq 4\) for every graph \(G\), and posed the question of whether \(\chi'_k(G)\)
  can be bounded solely in terms of \(k\) for every \(k\geq 3\). This question was answered in 1997 by Scott, who showed that $\chi'_k(G)\leq5k^2\log k$,
  and further asked whether \(\chi'_k(G) = O(k)\). Recently, Botler, Colucci, and Kohayakawa (2023) answered Scott's question affirmatively proving that \(\chi'_k(G) \leq 198k - 101\),
  and conjectured that the multiplicative constant could be reduced to \(1\). A step towards this latter conjecture was made in 2024 by Nweit and Yang, who improved the bound to \(\chi'_k(G) \leq 177k - 93\).
  In this paper, we further improve the multiplicative constant to~\(9\). 
  More specifically, we prove that there is a function \(f\in o(k)\) for which \(\chi'_k(G) \leq 7k + f(k)\) if \(k\) is odd,
  and \(\chi'_k(G) \leq 9k + f(k)\) if \(k\) is even. 
  In doing so, we prove that \(\chi'_k(G) \leq k + O(d)\) for every \(d\)-degenerate graph $G$, which plays a central role in our proof. 

  \smallskip
  \noindent\textbf{Keywords: } mod $k$-chromatic-index, degenerate graphs, linear upper bound
\end{abstract}

\section{Introduction}

The study of graph colourings with modular constraints has a well-established history, with numerous classical results in the area. According to Lovász \cite[Exercise 5.17]{lovasz2007combinatorial}, one such result is due to Gallai and states that every graph admits a bipartition of its vertex set in which each part induces an even graph, i.e., a graph in which each vertex has even degree. Over the past few decades, various extensions and refinements of this theorem have attracted considerable attention. Among these, perhaps the most extensively studied, is the problem of partitioning the vertex set of a graph into parts each inducing an odd subgraph, i.e., a subgraph in which every vertex has odd degree \cite{ferber2022every,scott1992large, scott2001induced}. When we go beyond even and odd, i.e., by requiring each part to induce a subgraph whose degrees have prescribed residues modulo \(k\), this problem is widely open. For example, a widely circulated folklore conjecture asserts that it suffices a constant number (depending on \(k\)) of colours to partition the vertices of a graph so that each colour class induces a graph in which all of the degrees are \(0\) modulo \(k\)
\cite[Problem 2]{caro1994zero}. At the moment, very few results are known around this conjecture
\cite{balister2023counting,ferber2023subgraphs, hunter2023result}. 

In this work, we turn to a natural analogue of these problems in the setting of edge colourings. In what follows, unless otherwise specified, \(k \geq 2\) denotes an integer. All graphs considered are simple, and \(e(G)\) denotes the number of edges in a graph \(G\). A \emph{\(\chi'_k\)-colouring} of \(G\) is an edge-colouring of \(G\) in which each colour class, i.e., the subgraph induced by its edges, has all vertex degrees congruent to \(1 \pmod{k}\). 
The \emph{mod $k$ chromatic index of \(G\)}, denoted by \(\chi'_k(G)\), is the minimum number of colours required for a \(\chi'_k\)-colouring of \(G\). Observe that any upper bound on  $\chi'_k(G)$ that depends only on $k$ immediately implies the aforementioned conjecture for line graphs.

The study of \(\chi'_k(G)\) was initiated by Pyber~\cite{pyber1991covering} proved that $\chi'_2(G) \leq 4$ for every graph $G$ and asked whether
$\chi'_k(G)$ is bounded by some function of \(k\) only. Scott
\cite{scott1997graph} proved that $\chi'_k(G) \leq 5k^2\log k$ for
every graph \(G\), and in turn asked if $\chi'_k(G)$ is in fact
bounded by a linear function of $k$. This would be the best possible apart from the multiplicative constant,
as \(\chi'_k(K_{1,k})=k\).  
Such a question was answered affirmatively by Botler, Colucci and Kohayakawa in 2023~\cite{Modk-198}, who proved that \(\chi'_k(G) \leq 198k - 101\) for every graph \(G\). They furthermore conjectured that the constant \(198\) can be brought all the way down to \(1\).

\begin{conjecture}[Botler--Colucci--Kohayakawa, 2023]\label{conj:main}
  Let \(k \geq 2\) be an integer.
  There is a constant $C$ such that $\chi'_k(G) \leq k+C$ for every
  graph~$G$.
\end{conjecture}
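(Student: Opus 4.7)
The plan is to combine the paper's degenerate-graph bound $\chi'_k(G)\le k+O(d)$ with a structural decomposition of an arbitrary graph into a sparse (bounded-degenerate) part and a single highly connected \emph{core} for which the conjecture can be verified directly. Since the degenerate bound already takes care of everything with degeneracy $O(1)$, the entire difficulty concentrates on dense subgraphs. Concretely, one should iteratively strip vertices whose degree falls below a threshold $\delta=\delta(k)$ to be chosen; call $H$ the resulting $\delta$-core and $G_0$ the subgraph of removed edges, which is automatically $\delta$-degenerate.

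The main substep would be to prove directly that $\chi'_k(H)\le k+C$ for an absolute constant $C$, whenever $H$ has minimum degree at least $\delta(k)$. I would try two complementary routes. The first is combinatorial: extract a near-$k$-regular spanning subgraph $F\subseteq H$ by repeatedly pulling out $k$-factors (using degree conditions \textit{\`a la} Katerinis/Hakimi in dense graphs), split $F$ into $k$ spanning pieces each of degree $\equiv 1\pmod k$, and absorb the leftover $H\setminus F$ using $O(1)$ further colour classes by an Eulerian-circuit argument that routes the residual edges through prescribed parities at each vertex. The second is algebraic: look for $k+C$ spanning subgraphs whose $\mathbb{Z}_k$-degree vectors sum to the all-ones vector at every vertex, then lift such a ``mod $k$ decomposition'' to an actual edge colouring by local swaps, which are feasible precisely because $H$ is dense. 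After $H$ is coloured, one applies the paper's degenerate-graph lemma to $G_0$ on top of $H$, but \emph{reusing} colour classes between $G_0$ and $H$: since each vertex of $G_0$ is added after the core has been coloured, its edges can be reassigned along the degeneracy ordering so that no new colours beyond the $k+C$ already used for $H$ (plus a bounded slack) are needed.

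The hard part is the core-colouring step, which is essentially the whole content of the conjecture; it is what forces the $7k+o(k)$ and $9k+o(k)$ bounds in this paper rather than $k+C$. Any viable approach must avoid the usual loss that arises from vertices whose residual degree falls in the wrong residue class modulo $k$, and converting a ``mod $k$ decomposition'' into an honest $\chi'_k$-colouring by bounded-depth local perturbation is the conceptual gap on which a proof would stand or fall. I would start by attempting the $d$-regular case with $d \gg k$, where factorisation tools are sharpest, and then reduce arbitrary dense graphs to this via the peeling argument above.
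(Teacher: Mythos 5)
This statement is an open conjecture: the paper does not prove it, and its main theorem only establishes the much weaker bounds $\chi'_k(G)\leq 9k+o(k)$ in general and $7k+o(k)$ for odd $k$. Your proposal does not prove it either, and you say so yourself: the substep ``$\chi'_k(H)\leq k+C$ whenever $H$ has minimum degree at least $\delta(k)$'' is left entirely unestablished, and it is not a technical loose end but the full content of the conjecture. Neither of your two routes closes it. The factorisation route already breaks at its cleanest instance: splitting a $k$-regular graph into $k$ colour classes with all non-zero degrees $\equiv 1\pmod k$ forces each class to be a perfect matching, i.e.\ a proper $k$-edge-colouring, which fails for Class~2 regular graphs; and extracting $k$-factors from graphs of large minimum degree requires parity and connectivity hypotheses that need not hold. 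The algebraic route founders exactly where you flag it: a family of $\mathbb{Z}_k$-degree vectors summing to the all-ones vector is far from an edge colouring, and no ``bounded-depth local swap'' procedure is known to realise one; this lifting problem is the same obstacle that limits the paper (and Scott, and Botler--Colucci--Kohayakawa) to multiples of $k$.

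Two further points in the periphery of your plan are also shakier than you suggest. First, the known lower bound $\chi'_k(K_{1,k,k})=k+2$ shows $C\geq 2$ even for quite dense, highly structured graphs, so density alone cannot make the core step easy. Second, the idea of ``reusing'' the core's colours when colouring the peeled $\delta$-degenerate part $G_0$ is not free: extending a colour class into a vertex of $H$ changes that vertex's degree in the class, and preserving the residue $1\pmod k$ there is precisely what forces the paper's star-covering lemma (Lemma~\ref{lemma:star-covering}) to spend roughly $2d+k$ colours rather than $k+O(1)$; the paper's own tightness example (the final lemma of Section~3) shows that any colouring scheme maintaining this kind of greedy invariant cannot do substantially better. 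So the peeling framework itself, without a genuinely new idea for the dense core and for recycling colours across the degenerate part, cannot reach $k+C$.
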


In support of Conjecture~\ref{conj:main}, Botler, Colucci, and Kohayakawa~\cite{modk-random}
proved that for every positive integer \(k\) there is a constant \(C_k\) such that if \(np \geq C_k\log k\) and \(n(1-p) \to \infty\) as \(n\to\infty\), 
then the following holds asymptotically almost surely: if \(k\) is odd, 
then \(\chi'_k(G(n,p)) = k\),
while if \(k\) is even, 
then \(\chi'_k(G(2n,p)) = k\) and \(\chi'_k(G(2n+1,p)) = k+1\). They also observed that the constant~$C$ in Conjecture~\ref{conj:main} must be at least~$2$, since \(\chi'_k(K_{1,k,k}) = k+2\). Further progress towards verifying Conjecture ~\ref{conj:main} was made by Nweit and Yang \cite{nweit2024mod}, who improved the multiplicative constant to \(177\), proving that \(\chi'_k(G) \leq 177k - 93\). In this paper, we further improve this constant as follows. 

\begin{restatable}{theorem}{mainTh}
\label{th:main}
There is a function \(f \in o(k)\)
such that for every positive integer $k$ and every graph $G$, we have $\chi'_k(G) \leq 
9k+f(k)$.
In addition, if $k$ is odd then $\chi'_k(G) \leq 
7k+f(k)$.
\end{restatable}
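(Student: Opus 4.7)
My plan is to combine the central lemma $\chi'_k(G) \leq k + O(d)$ for $d$-degenerate graphs (advertised in the abstract) with an iterative layer-extraction procedure. Concretely, I would peel off from $G$ a bounded number of structured spanning subgraphs, each colourable with $k + o(k)$ colours on its own palette, and then colour the leftover $o(k)$-degenerate remainder via the central lemma. Targeting $8$ peeled layers for general $k$ and only $6$ for odd $k$ (the savings coming from a parity-alignment trick) plus one application of the central lemma yields $9k + o(k)$ and $7k + o(k)$, respectively.

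\textbf{Key lemma and layer extraction.} For the bound $\chi'_k(G) \leq k + O(d)$ on a $d$-degenerate graph, I would process the vertices in an ordering $v_1, \ldots, v_n$ in which each $v_i$ has at most $d$ earlier neighbours, reserving $k$ ``main'' colours for the bulk of edges and $O(d)$ ``correction'' colours for the back-edges. The back-edges of $v_i$ are then routed through the correction palette in a way that avoids shifting any earlier vertex's main-colour degree past $\equiv 1 \pmod k$. To produce the peeled layers in the dense case, I would invoke a factor-extraction lemma in the style of Thomassen: a graph of sufficiently large minimum degree admits a spanning subgraph with all degrees in the residue class $1 \pmod k$. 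Iterating this extraction a constant number of times brings the minimum degree below the extraction threshold, at which point the remainder is $o(k)$-degenerate and the central lemma finishes the job with $k + o(k)$ additional colours.

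\textbf{Main obstacle.} The hardest step, I expect, will be proving the central lemma with the sharp additive $O(d)$ term rather than a weaker $O(d\log k)$ or $O(d+k)$ bound. A naive greedy colouring fails because adding a back-edge of an already-used main colour at a vertex whose degree in that colour is already $\equiv 1 \pmod k$ would shift its modular residue to $2$, requiring $k-1$ additional edges of the same colour to restore the invariant. Overcoming this seems to demand either a global re-routing argument that delays colour commitments until the entire ordering has been processed, or a bipartite-matching step that groups back-edges into blocks of size $\equiv 0 \pmod k$ so that each block collectively preserves the $\pmod k$ constraint at its endpoint. A secondary source of technical difficulty is the parity case distinction in the layer extraction: for even $k$, a $1 \pmod k$ factor exists only when the number of vertices of odd degree is even, forcing extra trimming steps whose cost is precisely what accounts for the gap between $7k$ and $9k$.
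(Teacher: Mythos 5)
Your proposal does not match the paper's argument, and as it stands it has a genuine gap in its main decomposition step. First, the layer accounting is internally inconsistent: a spanning subgraph with all degrees $\equiv 1 \pmod k$ is by definition a single colour class and costs exactly one colour, not $k+o(k)$; conversely, if each of your $8$ (resp.\ $6$) peeled layers is meant to cost $k+o(k)$ colours, you never specify what these layers are or why they exist. Second, and more fundamentally, the plan to iterate extraction ``until the remainder is $o(k)$-degenerate'' cannot work. After removing a maximal subgraph with degrees $\equiv 1\pmod k$, the strongest structural conclusion available is that the remainder contains no non-empty $k$-divisible subgraph, and even under the full Alon--Friedland--Kalai conjecture such a graph may have $(k-1)n$ edges, hence degeneracy $\Theta(k)$; further extractions do not strengthen this property, so no constant number of peels brings the degeneracy below $\Theta(k)$. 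The paper instead removes a \emph{single} maximal $1_k$-subgraph (one colour), proves an improved weak Alon--Friedland--Kalai bound $e(G)\leq (q(\tfrac{3k}{2})-1)n$ (resp.\ $(q(k)-1)n$ for odd $k$) to conclude the remainder is roughly $4k$-degenerate (resp.\ $3k$-degenerate), and then applies its degenerate-graph lemma in the sharp form $\chi'_k\leq 2d+k+O(\sqrt{d})$. The constants $9=2\cdot 4+1$ and $7=2\cdot 3+1$ arise precisely from the factor $2$ in front of $d$, which your ``$k+O(d)$'' formulation hides, together with the quality of the edge bound — an ingredient entirely absent from your proposal.

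Two further points. The parity gap between $7k$ and $9k$ does not come from the existence of $1\pmod k$ factors, but from the regular-subgraph lemmas feeding the edge bound: Tashkinov's theorem handles odd $k$ inside a $q(k)$-regular auxiliary graph, while for even $k$ Kano's theorem requires $k\leq 2r/3$, forcing the use of $q(\tfrac{3k}{2})$ in place of $q(k)$. On the positive side, your suggestion of a ``bipartite-matching step that groups back-edges into blocks of size $\equiv 0\pmod k$'' is genuinely close in spirit to the paper's mechanism for the degenerate case, which covers the uncoloured forward neighbours of each vertex by vertex-disjoint $0_k$-stars centred at already-used colours and $1_k$-stars centred at fresh colours, with a Hall-type matching argument finishing the leftover; but the proposal as written does not supply the details needed to make this precise.
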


The proof of Theorem~\ref{th:main} follows a general approach~\cite{Modk-198,nweit2024mod,scott1997graph} that involves removing a maximal colour class -- that is, the largest subgraph whose degrees are all congruent to \(1 \pmod{k}\) -- to reduce the problem to a graph with bounded degeneracy, and then constructing a \(\chi'_k\)-colouring of such graph using few colours. Our improvement builds on two key ingredients. First, we establish a stronger upper bound for the weak Alon--Friedland--Kalai Conjecture (see Conjecture~\ref{conj:alonfriedlandkalai}), which yields a tighter estimate on the number of edges -- and consequently, on the degeneracy -- of the graph remaining after removing a maximal colour class. Second, we provide an improved upper bound on the mod \(k\) chromatic index of graphs with bounded degeneracy. 
This is our main technical contribution.
\medskip

\noindent\textbf{The Alon--Friedland--Kalai Conjecture.}
We say that a graph \(H\) is \emph{\(k\)-divisible}
if \(d_H(u) \equiv 0 \pmod{k}\) for every \(u\in V(H)\). In 1984, Alon, Friedland, and Kalai~\cite{AlonFriedlandKalai-RegularSubgraphs} 
explored the extremal problem of calculating the maximum number of edges of a graph on \(n\) vertices with no \(k\)-divisible subgraph,
and proved the following upper bound 
in the case \(k\) is a prime power.
\begin{theorem}[Alon--Friedland--Kalai, 1984]\label{thm:alonfriedlandkalai}
    Let \(k\) be a prime power.
    If \(G\) is a graph on \(n\) vertices
    that does not contain a non-empty \(k\)-divisible subgraph,
    then \(e(G) \leq (k-1)n\).
\end{theorem}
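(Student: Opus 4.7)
My plan is to recast the statement as a zero-sum problem in $(\mathbb{Z}/k)^V$ and attack it by the polynomial method. Identify each edge $e = \{u,v\}$ with its incidence vector $\chi_e := \mathbf{1}_u + \mathbf{1}_v \in (\mathbb{Z}/k)^V$; a non-empty $F \subseteq E(G)$ induces a $k$-divisible subgraph precisely when $\sum_{e \in F} \chi_e \equiv 0$ in $(\mathbb{Z}/k)^V$. So the statement is implied by: every sequence of more than $n(k-1)$ vectors in $(\mathbb{Z}/k)^V$ admits a non-empty zero-sum subsequence. For prime power $k = p^s$, this is Olson's theorem that the Davenport constant of $(\mathbb{Z}/p^s)^n$ equals $n(p^s-1)+1$.

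For the prime case $k = p$, I would give the following direct Chevalley--Warning argument. For each edge $e$, introduce a variable $y_e \in \mathbb{F}_p$; for each vertex $v$, define the degree-$(p-1)$ polynomial
\[
f_v(y) := \sum_{e \ni v} y_e^{p-1}.
\]
By Fermat's little theorem, $y_e^{p-1}$ is the indicator of $y_e \neq 0$, so $f_v(y)$ equals the degree of $v$ in the support $I := \{e : y_e \neq 0\}$ modulo $p$. Hence the common zeros of $\{f_v\}_{v \in V}$ are exactly the assignments $y$ whose support is a $p$-divisible subgraph, each such $I$ accounting for $(p-1)^{|I|}$ assignments. Since $\sum_v \deg f_v = n(p-1) < m$, Chevalley--Warning gives
\[
\sum_{I} (p-1)^{|I|} \equiv 0 \pmod{p},
\]
the sum being over all $p$-divisible $I \subseteq E(G)$. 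Isolating the contribution $1$ of $I = \emptyset$ forces the remaining sum to be $\equiv -1 \pmod{p}$ and hence non-empty, proving the existence of a non-empty $p$-divisible subgraph.

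For a general prime power $k = p^s$ with $s \geq 2$, divisibility by $p^s$ cannot be detected by a single low-degree $\mathbb{F}_p$-polynomial. The natural fix is to impose one condition per base-$p$ digit of $\sum_{e \ni v} y_e^{p-1}$, using polynomials whose total degree telescopes to $n(p^s-1) = n(k-1)$, again matching the Chevalley--Warning threshold. I expect this to be the main obstacle: the digit-by-digit bookkeeping naturally wants polynomials built from binomial coefficients $\binom{\cdot}{p^t}$, which are not honest $\mathbb{F}_p$-polynomials for $t \geq 1$ owing to the $p$'s in their denominators, so a more delicate $p$-adic construction is needed---or, as is standard in the literature, one simply invokes Olson's theorem for $(\mathbb{Z}/p^s)^n$ as a black box.
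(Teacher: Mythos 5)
This statement is quoted in the paper as a known theorem of Alon, Friedland, and Kalai; the paper gives no proof of it, so the comparison must be with the original 1984 argument. Your proposal is correct and is essentially that original argument: the reduction of ``non-empty $k$-divisible subgraph'' to a non-empty zero-sum subsequence of the edge-incidence vectors in $(\mathbb{Z}/k)^V$ is exactly how Alon--Friedland--Kalai proceed, your Chevalley--Warning computation for prime $k$ (including the count $\sum_I (p-1)^{|I|}\equiv 0 \pmod p$ isolating $I=\emptyset$) is sound, and for $k=p^s$ with $s\ge 2$ they too invoke the Davenport-constant result $D\bigl((\mathbb{Z}/p^s)^n\bigr)=n(p^s-1)+1$ of Olson (and van Emde Boas--Kruyswijk) as a black box. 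Your closing remark correctly identifies why a naive single-polynomial Chevalley--Warning argument does not extend to $s\ge 2$, so the only ``gap'' is the acknowledged reliance on Olson's theorem, which is standard and matches the source.
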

They also conjectured that Theorem~\ref{thm:alonfriedlandkalai} can be generalized for any \(k\).
\begin{conjecture}[Alon--Friedland--Kalai, 1984]\label{conj:alonfriedlandkalai}
    Let \(k\) be a positive integer.
    If \(G\) is a graph on \(n\) vertices
    that does not contain a non-empty \(k\)-divisible subgraph,
    then \(e(G) \leq (k-1)n\).
\end{conjecture}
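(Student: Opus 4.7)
The statement is a long-standing open conjecture, known only in the prime-power case (Theorem~\ref{thm:alonfriedlandkalai}); my plan is therefore to build on the polynomial-method proof of that case and try to push it to composite $k$.

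For $k = p^a$, the known proof attaches a variable $x_e$ to each edge $e$ and, for each vertex $v$, considers the polynomial $f_v(x) = \sum_{e \ni v} x_e^{k-1}$ of degree $k-1$ over $\mathbb{F}_{p^a}$. Since $|\mathbb{F}_{p^a}^{\times}| = k-1$, the monomial $x^{k-1}$ is the indicator of $x \neq 0$, so a common root $(x_e)$ with support $S$ satisfies $|S \cap E(v)| \equiv 0 \pmod{p}$ for every $v$, and a further refinement upgrades this to full $k$-divisibility. Chevalley--Warning then guarantees a nonzero common root as soon as $\sum_v \deg f_v = (k-1)n < e(G)$, giving the conjectured bound in the prime-power case.

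To attack general $k$, I would write $k = \prod_i p_i^{a_i}$ and exploit the Chinese Remainder Theorem: a subgraph is $k$-divisible iff it is $p_i^{a_i}$-divisible for every $i$. The natural plan is to set up a single polynomial system over $\mathbb{Z}/k\mathbb{Z}$ whose nonzero $\{0,1\}$-solutions encode $k$-divisible subgraphs, and to run a Chevalley--Warning-style count to produce many such solutions as soon as $e(G) > (k-1)n$. An alternative would be to use the Combinatorial Nullstellensatz on a carefully chosen monomial coefficient to force a joint root with appropriate support.

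The main obstacle lies precisely here: Chevalley--Warning genuinely requires a field, and on $\mathbb{Z}/k\mathbb{Z}$ with $k$ composite one loses both the divisibility of the size of the zero-set and the identity $x^{k-1} = \mathbf{1}[x \neq 0]$ (no element of that order need exist). Applying the prime-power theorem to each $p_i^{a_i}$ separately does produce, for each $i$, a $p_i^{a_i}$-divisible subgraph $H_i \subseteq G$, but these $H_i$ are unrelated and ad-hoc intersections or unions destroy divisibility for all factors simultaneously. Any realistic attack therefore seems to require either an iterative refinement scheme that preserves $p_1^{a_1}$-divisibility while gradually enforcing the remaining conditions --- which in the literature only yields bounds of the form $(c(k)-1)n$ with $c(k)\gg k$, and appears to be the regime in which the partial progress announced in this paper operates --- or a fundamentally new algebraic tool native to $\mathbb{Z}/k\mathbb{Z}$. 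Closing the gap to the conjectured $(k-1)n$ for composite $k$ is the heart of the difficulty, and I do not expect the plan above to resolve it without substantial new ideas.
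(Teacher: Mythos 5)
The statement you were given is Conjecture~\ref{conj:alonfriedlandkalai}, which is an \emph{open conjecture}: the paper does not prove it, and only establishes the prime-power case (Theorem~\ref{thm:alonfriedlandkalai}, due to Alon, Friedland, and Kalai) plus a weakening for general $k$ (Lemma~\ref{lem: k-div}). Your proposal correctly recognises this and does not claim a proof; your diagnosis of the obstruction is accurate --- Chevalley--Warning requires a field, and over $\mathbb{Z}/k\mathbb{Z}$ with $k$ composite one loses both the zero-count divisibility and the identity $x^{k-1}=\mathbf{1}[x\neq 0]$, so the polynomial-method argument does not transfer. There is nothing to compare against on the level of a proof, since neither you nor the paper has one.

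One correction on how the paper's partial progress actually works, since you speculate about it: it is not an iterative CRT-style refinement over the prime factors of $k$, and it does not land in a regime $c(k)\gg k$. Lemma~\ref{lem: k-div} applies Theorem~\ref{thm:alonfriedlandkalai} at a \emph{single} prime $q$ ($q=q(k)$ for odd $k$, $q=q(3k/2)$ for even $k$) to extract a $q$-divisible subgraph from any graph with more than $(q-1)n$ edges, splits vertices to make it $q$-regular, and then invokes the Tashkinov/Kano theorems on regular subgraphs of regular graphs to find a $k$-regular subgraph inside, which projects back to a $k$-divisible subgraph of $G$. With the Baker--Harman--Pintz bound $q(k)\le k+k^{0.6}$ this gives $e(G)\le (1+o(1))kn$ for odd $k$ and $(3/2+o(1))kn$ for even $k$ --- i.e.\ within a constant factor (indeed asymptotically tight for odd $k$) of the conjectured $(k-1)n$, which is considerably stronger than the $c(k)\gg k$ regime you attribute to the literature. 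The genuinely open part is closing the remaining additive/multiplicative gap, and there your assessment that new algebraic ideas are needed seems right.
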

Botler, Colucci, and Kohayakawa proved a weaker version of Conjecture~\ref{conj:alonfriedlandkalai} in which \((k-1)n\)
is replaced by $(24k-12)n$~\cite[Lemma~3]{Modk-198}. 
We show that this bound can be sharpened as follows. 
Throughout the text, given a positive integer \(k\),
we denote by \(q(k)\) the smallest prime number that is equal to, or greater than $k$.
\begin{lemma}\label{lem: k-div}
Let $k\geq 2$ be an integer.  If $G$ is an $n$-vertex graph that does not contain any non-empty $k$-divisible subgraph, then $e(G)\leq (q(\frac{3k}{2})-1)n$. Moreover, if $k$ is odd, then $e(G)\leq (q(k)-1)n$. 
\end{lemma}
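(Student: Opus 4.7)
The plan is to reduce Lemma~\ref{lem: k-div} to Theorem~\ref{thm:alonfriedlandkalai} via a careful choice of prime. Set $p := q(k)$ when $k$ is odd, and $p := q(3k/2)$ in general; in either case $p$ is a prime. Suppose for contradiction that $e(G)$ exceeds the claimed bound, so in particular $e(G) > (p-1)n$. Theorem~\ref{thm:alonfriedlandkalai}, applied with the prime $p$, yields a non-empty $p$-divisible subgraph $H$ of $G$. The proof then reduces to the following auxiliary statement: every non-empty $p$-divisible graph $H$ contains a non-empty $k$-divisible subgraph, under the chosen relation between $p$ and $k$.

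To establish this auxiliary statement, I would exploit the fact that every non-isolated vertex of $H$ has degree at least $p$. In the odd case, $p = q(k)$ is an odd prime with $p \geq k$, so $p$ and $k$ share the same parity and every non-isolated vertex of $H$ already has degree at least $k$. My plan is to produce a non-empty $k$-regular subgraph of $H$ (which is automatically $k$-divisible), by combining the large minimum degree of $H$ with the rigid structural constraints obtained from a minimum (edge-minimal) $p$-divisible subgraph of $H$. In the general case, the extra margin $p \geq 3k/2$ is needed to absorb the parity mismatch between the odd prime $p$ and a possibly even $k$: I would first remove a minimum $T$-join of $H$, with $T$ the set of odd-degree vertices, to obtain an Eulerian subgraph. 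The slack $p - k \geq k/2$ then guarantees that every vertex remains of degree at least $k$ after this parity correction, so that the odd-case extraction argument can be applied to the corrected graph.

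The main obstacle lies in the extraction step. The polynomial method underlying Theorem~\ref{thm:alonfriedlandkalai} is no longer available for composite $k$, so one must exploit the structural information carried by $p$-divisibility---for instance through the $\mathbb{F}_p$-cycle space of $H$, or through the minimum-$p$-divisible structure---beyond what generic minimum-degree extraction results à la Pyber--Rödl--Szemerédi would provide. The fact that the prime gap $p - k$ is small by Bertrand's postulate and its refinements is crucial for controlling the error term $f(k) \in o(k)$ that ultimately appears in Theorem~\ref{th:main}. Handling the parity correction for even $k$ so as not to lose the degree threshold required by the extraction is a further delicate technical point, and is precisely what forces the enlargement from $q(k)$ to $q(3k/2)$ outside the odd case.
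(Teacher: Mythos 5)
Your first step --- applying Theorem~\ref{thm:alonfriedlandkalai} with the prime $p=q(k)$ in the odd case, or $p=q(\frac{3k}{2})$ in general, to obtain a non-empty $p$-divisible subgraph $H$, and reducing the lemma to the claim that $H$ contains a non-empty $k$-divisible subgraph --- is exactly the paper's reduction. But that is the easy half, and the extraction step, which you yourself flag as ``the main obstacle,'' is left as a plan rather than a proof. Large minimum degree alone does not yield a $k$-regular subgraph (for fixed $k\geq 3$ this is a genuinely hard problem in general), and the ``rigid structural constraints'' you invoke from an edge-minimal $p$-divisible subgraph are never made concrete. The missing idea is the paper's vertex-splitting trick: split each vertex of $H$ of degree $\ell p$ into $\ell$ copies of degree $p$, so that $H$ becomes a $p$-regular graph; then apply Tashkinov's theorem (Lemma~\ref{lemma:Tashkinov-1}: an $r$-regular graph with $r,k$ odd and $k<r$ contains a $k$-regular subgraph) or Kano's theorem (Lemma~\ref{lemma:kano-1}: $r$ odd, $k$ even, $k\leq 2r/3$); finally identify the split vertices back, which turns the $k$-regular subgraph into a $k$-divisible subgraph of $G$. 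The constant $\frac{3k}{2}$ is forced by the hypothesis $k\leq 2r/3$ in Kano's theorem, not by any parity-correction budget.

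Your proposed parity correction for even $k$ is also flawed as stated: removing a minimum (or inclusion-minimal) $T$-join does not decrease each degree by at most $p-k$. An inclusion-minimal $T$-join is a forest, but a forest can have vertices of arbitrarily large degree (e.g.\ a star all of whose leaves lie in $T$), so the claim that ``every vertex remains of degree at least $k$'' after the correction is unjustified. Moreover, even granting an Eulerian subgraph of minimum degree at least $k$, you would still need to extract a $k$-regular subgraph with $k$ even from it, which your odd-case argument (itself unproved) does not supply. So the proposal correctly identifies the shape of the argument but is missing the tool that actually closes it.
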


The proof of Lemma \ref{lem: k-div} is fairly straightforward and combines a simple idea from ~\cite{Modk-38} with known results guaranteeing the existence of \(k\)-regular subgraphs in \(r\)-regular graphs~\cite{even-regular,tashkinov1982regular}.

\begin{lemma}
[Tashkinov, 1982]
\label{lemma:Tashkinov-1}
Let \(r\) and \(k\) be positive odd integers
with $0< k < r$.
Then every $r$-regular graph contains a $k$-regular subgraph.
\end{lemma}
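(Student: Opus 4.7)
Proof plan.

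I would proceed by induction on $r$, descending in steps of $2$ so as to preserve the odd parity of $r$, with the base case $r = k$ being trivial (the graph $G$ itself is the desired subgraph). The inductive step reduces to showing: every $r$-regular graph $G$ with $r \geq 3$ odd contains an $(r-2)$-regular subgraph. Iterating this reduction $(r-k)/2$ times produces a $k$-regular subgraph of $G$, completing the induction.

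For the inductive step, the natural approach is to delete the edges of a spanning $2$-regular subgraph (a $2$-factor) of some connected component of $G$, thereby lowering that component's regularity by $2$. Petersen's $2$-factor theorem handles this directly for even $r$, but it fails for odd $r$ in general (a cubic graph without a perfect matching is the standard counterexample). The workaround is to pass to the bipartite double cover $\widetilde G$, which is $r$-regular bipartite and, by König's edge-coloring theorem, decomposes into $r$ perfect matchings; the union of any two of these is a $2$-factor of $\widetilde G$. The goal is to select the pair so that the resulting $2$-factor is invariant under the canonical fixed-point-free involution $\sigma$ on $V(\widetilde G)$ that swaps the two copies of each vertex of $G$. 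Such a $\sigma$-invariant $2$-factor descends to a $2$-regular subgraph of $G$ supported on a union of connected components of $G$, whose deletion yields the sought $(r-2)$-regular subgraph.

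The principal obstacle is producing this $\sigma$-invariant $2$-factor. Starting from an arbitrary $2$-factor of $\widetilde G$, one modifies it via switchings along alternating walks, iteratively restoring invariance on each $\sigma$-orbit of cycles; a cycle of the $2$-factor is either already $\sigma$-invariant (descending to a cycle in $G$) or paired with another cycle by $\sigma$ (the two jointly descending to a single cycle in $G$), and these are the only cases one must reconcile. The parity input that $r$ odd forces $|V(G)|$ to be even is exactly what makes the alternating-walk switchings terminate consistently and guarantees that every orbit can be corrected; this parity hypothesis is also the reason the lemma is restricted to odd $r$ and $k$, since without it the bookkeeping of $\sigma$-orbits of cycles breaks down.
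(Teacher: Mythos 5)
There is a fatal gap at exactly the point you call the ``principal obstacle,'' and it cannot be repaired by switchings: a $\sigma$-invariant $2$-factor of the bipartite double cover $\widetilde G$ projects to a \emph{spanning} $2$-regular subgraph of $G$, i.e., to a $2$-factor of $G$. (The two edges of the $2$-factor at the copy $v_1$ project to two distinct edges at $v$, and $\sigma$-invariance forces $v_2$ to contribute the same pair, so every vertex of $G$ gets degree exactly $2$.) Thus, if your procedure succeeded, it would prove that every odd-regular graph has a $2$-factor, which is false. In a cubic graph the complement of a $2$-factor is a perfect matching, and there are connected cubic graphs with no perfect matching: take three copies of $K_4$ with one edge subdivided, and join the three degree-$2$ vertices to a new central vertex. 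This graph is connected, cubic, has $16$ vertices (so the parity input you invoke --- $r$ odd forces $|V(G)|$ even --- holds and does not help), yet has no perfect matching and hence no $2$-factor; the escape hatch of working ``on a union of connected components'' also fails because the counterexample is connected. So for this graph no $\sigma$-invariant $2$-factor of $\widetilde G$ exists at all, and the alternating-walk corrections you sketch are attempting to reach a configuration that is not there.

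The underlying misstep is conflating regular \emph{subgraphs} with \emph{factors}. Tashkinov's theorem concerns subgraphs that are in general far from spanning --- for $r=3$, $k=1$ it asserts only that an edge exists, while a perfect matching may not --- so any inductive step of the form ``delete a spanning $2$-regular subgraph to pass from $r$ to $r-2$'' is structurally doomed for odd $r$; the $(r-2)$-regular subgraph one must produce is typically non-spanning. Note also that the paper itself does not prove this lemma: it quotes Tashkinov (1982) as a black box, and the known proof is a substantially more involved direct construction of a non-spanning $k$-regular subgraph, not a double-cover or factor argument. Your outer skeleton (descending induction in steps of $2$ through odd degrees, base case $r=k$) is fine and matches the shape of the statement, but the inductive step as proposed proves a false intermediate claim and needs an entirely different mechanism.
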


\begin{lemma}
[Kano, 1989]
\label{lemma:kano-1}
Let \(r\) and \(k\) be respectively 
positive odd and even integers with $2\leq k \leq 2r/3$.
Then every $r$-regular graph contains a $k$-regular subgraph.
\end{lemma}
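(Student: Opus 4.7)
The plan is to combine Tutte's $f$-factor theorem with the bipartite double cover construction to locate a $k$-regular subgraph. First, I would form the bipartite double cover $B = G \times K_2$, which is $r$-regular bipartite. By König's edge coloring theorem, $B$ decomposes into $r$ perfect matchings, and the union of any $k$ of them yields a spanning $k$-regular bipartite subgraph $\tilde{B}$. Projecting $\tilde{B}$ back to $G$ via the canonical $2$-to-$1$ edge map assigns each $e \in E(G)$ a multiplicity $x_e \in \{0,1,2\}$ with $\sum_{e \ni v} x_e = 2k$ for every $v \in V(G)$, i.e.\ a half-integral $k$-factor of $G$.

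Writing $E_i = \{e : x_e = i\}$, the subgraph $(V, E_1)$ has every degree even (namely $2k - 2\deg_{E_2}(v)$) and hence decomposes into edge-disjoint cycles. If every such cycle is even, $E_1$ admits a proper $2$-edge-coloring, and one color class combined with $E_2$ produces a $k$-factor of $G$, which is in particular a $k$-regular subgraph. The hypothesis $k \leq 2r/3$ enters via a double count on the unused edges $E_0 = E(G) \setminus (E_1 \cup E_2)$: each vertex satisfies $\deg_{E_0}(v) = r - 2k + \deg_{E_2}(v)$, and $|E_2|$ is forced to grow with $k - r/2$ when $k > r/2$, so either $E_0$ or $E_2$ supplies enough slack at every vertex to carry out rerouting.

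The main obstacle is resolving odd cycles in $E_1$. The plan here is an exchange argument: given an odd cycle $C$ in $E_1$, locate a vertex $v \in V(C)$ with an incident edge $vw \in E_0 \cup E_2$, and perform an alternating swap that routes $C$ through $vw$ and converts it into an even structure, while maintaining the half-integral $k$-factor property; the slack established above guarantees such a swap is always available. The process must be shown to terminate -- e.g.\ by a monotone decrease in the number of odd cycles -- after which the proper $2$-edge-coloring of $E_1$ goes through.

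If the swap argument proves too delicate, a backup route is to apply Tutte's $f$-factor theorem with $f \equiv k$ not to $G$ itself but to an induced subgraph $G[U]$ obtained by iteratively deleting vertices that locally violate the Tutte condition; since $r$-regular graphs with $r$ odd need not admit a $k$-factor (a $3$-regular graph without a perfect matching is already a $2$-factor-free example), this localization step is necessary. The hypothesis $k \leq 2r/3$, together with the parity link $|V(C)| \equiv e_G(V(C), S) + e_G(V(C), T) \pmod 2$ coming from the oddness of $r$, should sharpen the naive bound $q_G(S,T) \leq e_G(V\setminus(S\cup T), T)$ enough to ensure the deletion terminates with a nonempty $U$ on which $G[U]$ admits a $k$-factor, yielding the desired $k$-regular subgraph of $G$.
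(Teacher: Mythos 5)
This lemma is not proved in the paper at all: it is imported verbatim as a known result of Kano (1989), so your attempt is being measured against the literature rather than against an in-paper argument. Unfortunately, your main route has a fatal, structural flaw. The double-cover construction is fine and does produce a half-integral $k$-factor, i.e.\ multiplicities $x_e\in\{0,1,2\}$ with $\sum_{e\ni v}x_e=2k$ at \emph{every} vertex, and your parity analysis of $E_1$ and the $2$-colouring step are correct. But you insist that the odd-cycle repair ``maintain the half-integral $k$-factor property'', so if the swap process ever terminated you would have constructed a spanning $k$-regular subgraph, that is, a genuine $k$-factor of $G$. As you yourself concede in your backup paragraph, such factors need not exist under these hypotheses: a cubic graph without a perfect matching has no $2$-factor, yet $r=3$, $k=2$ satisfies $2\le k\le 2r/3$. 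Hence for such a graph the alternating-swap step must fail --- no slack count on $E_0\cup E_2$ can rescue it, and the claimed ``monotone decrease in the number of odd cycles'' cannot be established. The whole point of the lemma is that the $k$-regular subgraph may omit vertices, which your bookkeeping (degree sum exactly $2k$ everywhere) structurally forbids; any correct proof must allow vertices to drop out, and that is exactly where the hypothesis $k\le 2r/3$ has to do real work, whereas in your sketch it appears only in a heuristic count that never feeds into the swap mechanism.

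The backup route is likewise not a proof but a restatement of the difficulty: ``iteratively deleting vertices that locally violate the Tutte condition'' together with the hope that parity ``should sharpen'' the deficiency bound is precisely the hard content of Kano's theorem. Nothing in the proposal shows the deletion process terminates with a nonempty $U$, and after the first deletion $G[U]$ is no longer regular, so the Tutte-condition estimates you invoke for $r$-regular graphs no longer apply directly; the threshold $2r/3$ again never genuinely enters. So neither route, as written, establishes the lemma --- the honest course here is the one the paper takes, namely citing Kano's 1989 theorem (and Tashkinov's 1982 theorem for the odd companion, Lemma~\ref{lemma:Tashkinov-1}), since these are substantial results in factor theory whose proofs are well beyond an exchange-argument sketch.
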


\begin{proof}[Proof of Lemma \ref{lem: k-div}]
First, suppose that $k$ is even and that $e(G)>(q(\frac{3k}{2}) - 1)n$. 
By Theorem \ref{thm:alonfriedlandkalai}, $G$ contains a $q(\frac{3k}{2})$-divisible subgraph. 
Let $G'$ be a $q(\frac{3k}{2})$-regular graph obtained from $G$ by splitting each vertex $u\in V(G)$ of degree $\ell \cdot q(\frac{3k}{2})$ into $\ell$ distinct vertices of degree $q(\frac{3k}{2})$, say \(u_1,\ldots u_\ell\), so that \(N(u) = \cup_{i=1}^\ell N(u_i)\). 
Now, by Lemma~\ref{lemma:kano-1}, \(G'\) contains a $k$-regular subgraph \(H'\). 
Let \(H\) be the subgraph of \(G\) 
corresponding to \(H'\),
i.e., obtained from \(H'\) by identifying the vertices \(u_1,\ldots, u_\ell\) into \(u\),
and observe that \(H\) is \(k\)-divisible because \(H'\) is \(k\)-regular.
The proof in the case of odd \(k\) is analogous,
but uses \(q(k)\) instead of \(q(\frac{3k}{2})\)
and Lemma~\ref{lemma:Tashkinov-1} instead of Lemma~\ref{lemma:kano-1}. 
\end{proof}

A result of Baker, Harman, and Pintz~\cite{baker2001difference} states that for all sufficiently large integers $k$, we have $q(k)\leq k+k^{0.6}$. Therefore, we can deduce the following corollary. 

\begin{corollary}\label{cor: edges}
Given a positive integer \(k\), if \(G\) is a graph on \(n\) vertices without non-empty \(k\)-divisible subgraphs,
then \(e(G) \leq (3/2 + o(1))kn\).   
\end{corollary}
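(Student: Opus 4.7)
My plan is to combine the two inputs already in hand immediately above the corollary: the case split in Lemma~\ref{lem: k-div}, and the Baker--Harman--Pintz prime-gap estimate $q(m)\le m + m^{0.6}$ valid for all sufficiently large $m$. In each parity class I simply substitute the prime-gap bound into the corresponding inequality of Lemma~\ref{lem: k-div}, and read off the leading constant.

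For odd $k$, Lemma~\ref{lem: k-div} already gives $e(G)\le (q(k)-1)n$, which after substitution becomes $e(G)\le (1+o(1))kn$, comfortably stronger than the claimed $(3/2+o(1))kn$.

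For even $k$ -- the case that fixes the constant $3/2$ -- Lemma~\ref{lem: k-div} gives $e(G)\le (q(3k/2)-1)n$. Applying the prime-gap bound to $m=3k/2$ yields $q(3k/2)\le 3k/2 + (3k/2)^{0.6}$, so that
\[
\frac{e(G)}{n} \;\le\; \frac{3k}{2} + (3k/2)^{0.6} - 1 \;=\; \Bigl(\tfrac{3}{2} + O(k^{-0.4})\Bigr)k \;=\; \bigl(\tfrac{3}{2}+o(1)\bigr)k,
\]
which is exactly what is asserted.

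The only point deserving a remark is the finitely many small values of $k$ for which the Baker--Harman--Pintz bound does not yet hold; these can be absorbed into the $o(1)$ term by enlarging the implicit constant on a finite set, since $o(1)$ constrains only asymptotic behaviour as $k\to\infty$. I do not anticipate any real obstacle: the corollary is essentially a numerical substitution of the cited prime-gap estimate into the two-case inequality of Lemma~\ref{lem: k-div}, with the even case dominating the odd one and thus determining the leading coefficient $3/2$.
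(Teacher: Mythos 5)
Your proposal is correct and matches the paper's (implicit) derivation exactly: the corollary is obtained by substituting the Baker--Harman--Pintz bound $q(m)\le m+m^{0.6}$ into the two cases of Lemma~\ref{lem: k-div}, with the even case determining the constant $3/2$ and finitely many small $k$ absorbed into the $o(1)$ term.
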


\section{Proof of Theorem~\ref{th:main}}
In this section, we prove Theorem~\ref{th:main}.
First, we deal with the case of degenerate graphs.
Given an integer $d$, we say that a $n$-vertex graph is $d$-\emph{degenerate} if there is an ordering $v_1,\dots,v_n$ of its vertices for which the number of neighbours of $v_i$ in $\{v_1,\dots, v_{i-1}\}$ is at most $d$. 
In 2023, Botler, Colucci, and Kohayakawa proved that
\(\chi'_k(G) \leq 4d + 2k - 2\) 
for every \(d\)-degenerate graph~\cite[Lemma~4]{Modk-198}. We improve their result as follows. 

\begin{lemma}\label{lem:deg}
        Let \(k\), \(d\), and \(a\) be positive integers 
        with \(k \geq 3\).
        If \(G\) is a \((d+1)\)-degenerate graph, then 
        $$\chi'_k(G)\leq 2d+k+a + d/a + 1.$$
\end{lemma}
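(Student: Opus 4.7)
My plan is to prove Lemma~\ref{lem:deg} by processing the vertices of $G$ in reverse along a $(d{+}1)$-degeneracy ordering $v_1, \ldots, v_n$, i.e.\ in the order $v_n, v_{n-1}, \ldots, v_1$. When vertex $v = v_i$ is considered, every edge from $v$ to later vertices $\{v_{i+1}, \ldots, v_n\}$ has already been coloured, while the at most $d+1$ back-edges from $v$ to $\{v_1, \ldots, v_{i-1}\}$ remain uncoloured. The goal at this step is to assign colours from a fixed palette of size $N := 2d + k + a + d/a + 1$ to these back-edges so that, afterwards, the degree of $v$ in every colour class is $\equiv 0$ or $1 \pmod k$. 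Since back-edges of $v$ are never revisited, iterating the procedure for $i = n, n-1, \ldots, 1$ will finalize $v$'s contribution to each colour class and deliver a $\chi'_k$-colouring of $G$ using $N$ colours.

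The heart of the argument is a single-vertex extension claim. Writing $\delta_c$ for the current degree of $v$ in colour $c$ (contributed by $v$'s already-coloured forward edges), call a colour $c$ \emph{bad at $v$} if $\delta_c \not\equiv 0, 1 \pmod k$; fixing such $c$ requires assigning at least $k - (\delta_c \bmod k)$ back-edges the colour $c$, while remaining back-edges may receive fresh colours (each used once to contribute a single edge, i.e.\ $\equiv 1 \pmod k$). To enable this extension I intend to split the palette of $N$ colours into three structured blocks: a \emph{slot} block of $d+1$ colours, each used at most once per vertex so it never produces bad residues at back-neighbours; a \emph{work} block of about $d + a + \lceil d/a \rceil$ colours deployed so that, by a pigeonhole argument on the work-block edges crossing any vertex, each vertex inherits at most $O(d/a)$ bad residues; and a \emph{correction} block of $k-1$ colours reserved for modular fix-ups. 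Summing the three block sizes, together with the $+1$ arising from the ``$0$ or $1$'' relaxation, gives exactly the stated bound $2d + k + a + d/a + 1$; the parameter $a$ is a free knob balancing the size of the work block against the number of bad residues per vertex.

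The main obstacle I anticipate lies in establishing the single-vertex extension step under the above invariant: a priori, the forward edges of $v$ were coloured to satisfy constraints at \emph{other} vertices, so $v$ could inherit many bad colours. The crux of the proof will be to show, likely by a matching or double-counting argument on the work block, that the three-block design bounds the number of bad colours at $v$ by a quantity comfortably absorbed by $v$'s $\leq d+1$ back-edge slots, and that at least one compatible assignment of slot, work, and correction colours to those slots always exists. Once this local extension is in hand, iterating it along the reverse degeneracy ordering yields the desired $\chi'_k$-colouring, completing the proof.
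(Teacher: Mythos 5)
Your proposal is an outline rather than a proof: the step you yourself flag as ``the crux'' --- that the three-block palette bounds the number of bad residues at each vertex and always admits a compatible assignment --- is exactly the hard part, and it is not established. Worse, the reverse-order scheme runs into concrete obstructions. When you process $v_i$ you may only recolour its $\leq d+1$ back-edges, but $v_i$ can have arbitrarily many already-coloured forward edges, so without a strong global invariant it can inherit up to $N$ bad colours, each needing up to $k-1$ additional edges of that same colour to repair; the $\leq d+1$ back-edges cannot absorb this. The ``correction block'' does not help here: the residue of colour $c$ at $v$ can only be changed by putting more edges of colour $c$ at $v$, so reserving $k-1$ \emph{other} colours fixes nothing. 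The ``slot block'' claim that $d+1$ colours can each be ``used at most once per vertex'' (so as never to create bad residues at back-neighbours) would make each such colour class a matching, i.e.\ a partial proper edge colouring of $G$ with $d+1$ colours --- impossible once $\Delta(G)>d+1$. Finally, your target ``degree $\equiv 0$ or $1 \pmod k$ in every colour class'' is not the right condition: a vertex of positive degree $\equiv 0\pmod k$ (e.g.\ degree $k$) in a colour class violates the definition of a $\chi'_k$-colouring, and indeed your repair quantity $k-(\delta_c\bmod k)$ drives colours to residue $0$, not $1$.

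The paper avoids all of this by processing the degeneracy ordering \emph{forwards} and, at step $i$, colouring the \emph{forward} edges of $v_i$ (all edges $v_iv_j$ with $j>i$) in one shot, while maintaining the invariant that every not-yet-processed vertex has at most one coloured edge in each colour. Then the only coloured edges at $v_i$ are its $\leq d+1$ back-edges, all in distinct colours $A$; to finish $v_i$ one must add a $0_k$-star of forward edges in each used colour of $A$ and $1_k$-stars in fresh colours $B$, each forward neighbour receiving exactly one new edge in a colour it does not yet see (which preserves the invariant). This reduces each step to the star-covering statement of Lemma~\ref{lemma:star-covering} in an auxiliary colour/neighbour bipartite graph, proved via a Hall-type matching argument. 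If you want to salvage your reverse-order idea you would need an invariant of comparable strength controlling, for every unprocessed vertex, both the number of colours with nonunit residue and their total deficiency; as written, no such invariant is maintained.
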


By setting \(a = \lfloor\sqrt{d}\rfloor\), Lemma~\ref{lem:deg}
states that \(\chi'_k(G) \leq 2d + k + 2\lfloor\sqrt{d}\rfloor + 3\) for every \((d+1)\)-degenerate graph \(G\).

 Our starting point to prove Lemma \ref{lem:deg} is the following simple Hall-type result. 
Given disjoint sets \(U\) and \(W\),
a \emph{\((U,W)\)-bipartite graph} is a bipartite graph with vertex partition~\((U,W)\).

\begin{lemma}\label{lemma:matching}
    Let \(k\) be a positive integer
    and let \(G\) be a \((B,X)\)-bipartite graph
    such that \(d(u) \leq k\) for every \(u\in B\),
    and \(d(v) \geq k\) for every \(v\in X\).
    Then \(G\) has a matching that covers \(X\).
\end{lemma}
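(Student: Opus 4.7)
The plan is to deduce the matching from Hall's marriage theorem applied on the $X$ side. So I would reduce the problem to verifying Hall's condition: for every $S \subseteq X$, the neighbourhood $N(S) \subseteq B$ satisfies $|N(S)| \geq |S|$. Once Hall's condition is established, Hall's theorem immediately produces a matching that saturates $X$.

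To check Hall's condition, I would use a simple double-counting argument on the edges of $G$ with one endpoint in $S$. On the one hand, since every vertex $v \in X$ has $d_G(v) \geq k$, and all neighbours of $v$ lie in $B \supseteq N(S)$, the number of such edges is at least $k|S|$. On the other hand, each of these edges also has its endpoint in $N(S)$, and each $u \in N(S)$ has $d_G(u) \leq k$, so this count is at most $k|N(S)|$. Combining the two inequalities gives $k|S| \leq k|N(S)|$, and dividing by $k$ (which is positive) yields $|S| \leq |N(S)|$, as required.

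There is essentially no obstacle here: the argument is a one-line degree count followed by an invocation of Hall's theorem, and the fact that all neighbours of $v \in X$ sit in $B$ (which is forced by the bipartition) is what makes the inequality clean. The only thing worth double-checking is that $k \geq 1$ so the division step is valid, which is given by hypothesis.
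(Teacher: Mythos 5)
Your proof is correct and is essentially identical to the paper's: both verify Hall's condition for subsets $S \subseteq X$ by double-counting the edges between $S$ and $N(S)$, obtaining $k|S| \leq e(S, N(S)) \leq k|N(S)|$ and dividing by $k$. Nothing further is needed.
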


\begin{proof}
    We check Hall's condition.
    Let \(S\subseteq X\).
    We count \(e(S,N(S))\).
    Since \(d(u) \leq k\) for every \(u\in B\),
    we have \(e(S,N(S)) \leq k \cdot|N(S)|\);
    and since \(d(v) \geq k\) for every \(v\in X\),
    we have \(e(S,N(S)) \geq k\cdot|S|\).
    Therefore
    \[
    k\cdot|N(S)| \geq e(S,N(S)) \geq k\cdot|S|,
    \]
    which gives us \(|N(S)|\geq |S|\) as desired.
\end{proof}

Our proof of Lemma \ref{lem:deg} consists of consecutive applications of the following more general lemma on partitioning the vertex set of a bipartite graph into special stars.
In what follows, we say that a star \(S\) is an \emph{\(\ell_k\)-star}
if \(e(S) = \ell \pmod{k}\),
and we say that \(S\) is a \emph{trivial} \(\ell_k\)-star if \(e(S) = \ell\).

\begin{lemma}\label{lemma:star-covering}
    Let \(k\), \(d\), and \(a\) be positive integers,
    and let \(A\) and \(B\) be disjoint sets such that
    \(|A| \leq d+1\) and \(|A| + |B| = 2d + k + a + d/a + 1\).
    Let \(G\) be a \((A\cup B, X)\)-bipartite graph, such that \(d(x) \geq |A\cup B| - d\) for every \(x\in X\).
    Then, there is a covering of \(X\) 
    by vertex-disjoint stars \(\mathcal{S}_1,\ldots,\mathcal{S}_s, \mathcal{S}'_1,\ldots,\mathcal{S}'_{s'}\)
    such that \(\mathcal{S}_i\) is a \(0_k\)-star centered in \(A\) for every \(i\in [s]\),
    and \(\mathcal{S}'_i\) is an \(1_k\)-star centered in \(B\) for every \(i\in [s']\).
\end{lemma}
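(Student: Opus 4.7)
The plan is to construct the covering of $X$ in two stages. First, I would handle the residue $|X|\bmod k$ using trivial $1_k$-stars centered in $B$, reducing to the case where the number of uncovered $X$-vertices is divisible by $k$. Then I would cover the remainder using $A$-centered $0_k$-stars and, if necessary, $B$-centered $1_k$-stars of larger size. The parameters $a$ and $d/a$ enter in the second stage to balance how many $A$-centers carry ``bulk'' stars against the residual size that $B$ must absorb.

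For the first stage, set $r = |X|\bmod k$. Every $x \in X$ has at most $d$ non-neighbors in $A\cup B$, so at least $|B|-d \geq k + a + d/a$ of its neighbors lie in $B$. Hence, for any $S \subseteq X$ with $|S|\leq r$, we have $|N_G(S)\cap B| \geq |B| - d \geq k > r \geq |S|$, so Hall's theorem (in the counting spirit of Lemma~\ref{lemma:matching}) yields a matching of any fixed $r$-subset of $X$ into $r$ distinct vertices of $B$. Each matching edge becomes a trivial $1_k$-star. Writing $X'$ for the remaining $|X|-r$ uncovered vertices and $M$ for the matching of this stage, we have $|X'| \equiv 0 \pmod k$.

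For the second stage, I would greedily extract $A$-centered $0_k$-stars from the induced bipartite graph on $\bigl(A \cup (B\setminus M)\bigr)$ and $X'$. A natural first attempt is: while some unused $a \in A$ has at least $k$ uncovered neighbors in $X'$, form a $k$-leaf $0_k$-star at $a$ and remove those $k$ vertices from $X'$. When this greedy procedure terminates with residual uncovered set $X''$ and unused $A$-subset $A''$, combining the saturation bound $\sum_{a\in A''} |N_G(a)\cap X''| < k\,|A''|$ with the lower bound $|N_G(x)\cap A''| \geq |A''| - d$ for $x \in X''$ (implied by $d(x) \geq |A\cup B| - d$) yields a bound on $|X''|$. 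The aim is then to match $X''$ into $B\setminus M$ via additional trivial $1_k$-stars while keeping the total number of $B$-stars congruent to $|X|\pmod k$.

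\textbf{Main obstacle.} The hardest step is to ensure that $|X''|$ is small enough to fit into the slack $|B|-r-d \geq a + d/a$ of the unused $B$-vertices. A plain size-$k$ greedy only bounds $|X''| < k(d+1)$, which is too large. The remedy, and the reason the parameter $a$ appears, is to instead extract $A$-stars of size $\approx a k$: using roughly $d/a$ such stars absorbs a bulk of about $d k$ vertices from $X'$, so the residue drops to size about $a + d/a$ and fits into the slack of $B$. Verifying that such bulk stars can be extracted at every step (using the high $X$-degree of each unused $a$ against the saturation condition), and that the resulting modular counts of $A$- and $B$-stars align with $|X|\bmod k$, is the delicate core of the proof.
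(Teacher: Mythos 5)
There is a genuine gap in your second stage. Your first stage is harmless but also unnecessary: in \emph{any} covering of $X$ by $0_k$- and $1_k$-stars the number of $1_k$-stars is automatically $\equiv |X| \pmod{k}$ (sum the leaf counts), so there is no congruence to engineer. The real difficulty is the one you flag yourself, and your proposed remedy does not resolve it. The obstruction to bounding $|X''|$ is not that the $A$-centred stars are too small; it is that a vertex $x\in X$ is only guaranteed $d_A(x)\ge |A|-d$ neighbours in $A$, which can be as small as $1$ (or $0$ if $|A|\le d$). In that regime, no choice of $A$-centred $0_k$-stars, however large, prevents each $u\in A$ from retaining up to $k-1$ uncovered private neighbours, so $|X''|$ can genuinely reach about $(k-1)|A|\approx kd$. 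That is far beyond the roughly $a+d/a$ slack left in $B$ for trivial $1_k$-stars, and your claim that ``$d/a$ stars of size $ak$ absorb $dk$ vertices, so the residue drops to about $a+d/a$'' has no mechanism behind it: if the $A$--$X'$ graph is sparse, there simply are no large $A$-stars to extract, and the uncoverable residue stays large.

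The missing idea (and the actual role of the parameter $a$) is to \emph{pad} $A$: take $A'\supseteq A$ with $|A'|=d+a$ by adding vertices of $B$, and in the first round extract maximum $0_k$-stars centred in $A$ together with maximum \emph{non-trivial} $1_k$-stars centred in $A'\setminus A\subseteq B$. Since every $x\in X$ has at most $d$ non-neighbours, $d_{A'}(x)\ge a$, and the saturation bound $e(X',A')\le k|A'|$ then gives $|X'|\le k+dk/a$ --- a bound driven by the padded degree $a$, not by star sizes. A second round of at most $1+d/a$ non-trivial $1_k$-stars centred in $B'=(A\cup B)\setminus A'$ then brings all $B$-side degrees below $k$ while leaving $|B''|\ge d+k$, at which point Lemma~\ref{lemma:matching} covers the remainder by trivial $1_k$-stars. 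Without the padding step and the intermediate round of non-trivial $B$-centred stars, the final matching step cannot be reached.
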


\begin{proof}
    Let \(A'\subseteq A\cup B\) be a set of size \(d + a\) 
    that contains \(A\) (this is possible because \(|A|\leq d+1\)), 
    and let \(B' = (A\cup B)\setminus A'\).
    First, while possible let \(S_i\) be a maximum \(0_k\)-star centered in \(A\)
    or a maximum non-trivial \(1_k\)-star centered in \(A'\setminus A\) (equivalently, $A'\cap B$),
    so that the obtained stars \(S_1,\ldots, S_r\) have disjoint leaves.
    Let \(X'\subseteq X\) be the set of vertices obtained from \(X\) by removing the leaves of such stars.
    Observe that \(d_{X'}(u) \leq k\) for every \(u\in A'\),
    otherwise we could pick one more such star, or increase an existing star.
    Also, note that since \(d(x) \geq |A\cup B| - d\), we have that \(d_{A'}(x) \geq a\) holds for every \(x\in X'\).
    Therefore, we have
    \[
        a\cdot |X'| \leq e(X',A') \leq k\cdot |A'| = k\cdot (d+a),
    \]
    which gives us \(|X'| \leq k + d\cdot k/a\).

    Now, while possible pick a maximum non-trivial \(1_k\)-star \(S'_i\) centered in \(B'\) whose leaves are in \(X'\)
    so that the obtained stars \(S'_1,\ldots, S'_{t'}\) have disjoint leaves.
    Let \(X''\subseteq X'\) be the set of vertices obtained from \(X'\) by removing the leaves of such stars.
    Analogously to the step above, 
    we have \(d_{X''}(u) \leq k\) for every \(u\in B'\),
    otherwise we could pick one more such star, or increase an existing star.
    Observe that each such star used at least \(k+1\) vertices of \(X'\),
    and hence, 
    \[
        |X'| \geq (k+1) \cdot t' \geq k \cdot t',
    \]
    which gives us \(t' \leq 1 + d/a\).

    Now, let \(B''\subseteq B'\) be the set obtained from \(B'\)
    by removing the centres of \(S'_1,\ldots, S'_{t'}\).
    Note that \(|B''| = |B'| - t' \geq d + k\).
    Therefore, \(d_{B''}(x) \geq k\) for every vertex \(x\in X''\),
    and hence \(H = G[B''\cup X'']\) is a bipartite graph
    for which \(d_{X''}(u) \leq k\) for every \(u\in B''\)
    and \(d_{B''}(x) \geq k\) for every vertex \(x\in X''\),
    and, by Lemma~\ref{lemma:matching},
    there is a matching \(M\) that covers \(X''\).
   It follows that each edge in \(M\) 
    is a \(1_k\)-star centred in \(B''\). This completes the proof since the collection of stars \(\mathcal{S}_1,\ldots,\mathcal{S}_{s}, \mathcal{S}'_1,\ldots,\mathcal{S}'_{s'}\) is given by the union of the collections of stars identified before.
    \end{proof}
 We can now prove our main lemma.
 Given non-negative integers \(\ell\) and \(k\), with \(\ell < k\),
 we say that a graph \(G\) is an \emph{\(\ell_k\)-graph}
 if every vertex of \(G\) has degree \(\ell\) modulo \(k\).

    \begin{proof}[Proof of Lemma \ref{lem:deg}]
        Let $G$ be a $(d+1)$-degenerate graph and let $v_1, \ldots, v_n$ be an ordering of its vertices so that for every $ 1 \leq i \leq n$, the vertex $v_i$ has at most $d+1$ neighbours in $\{v_1,\ldots,v_{i-1}\}$. 
        We describe a \(\chi'_k\)-colouring of $G$ that we define recursively while maintaining the following invariant: at the beginning of step $i$, all the edges incident to a vertex $v_j$ with $j<i$ are coloured in such a way that each colour class induces a \equivgraph-graph, and no other edge is coloured. 
        Moreover, for each \(j\geq i\),
        all the coloured edges incident to \(v_j\) have distinct colours.
        This invariant is trivially satisfied at the beginning of step $1$.

        Therefore, assume that the first $i-1$ steps have been carried out, and  denote by $c_i$ the obtained colouring. 
        At step $i$, we consider the vertex $v_i$. 
        By construction $v_i$ is incident to at most $d+1$ already coloured edges. 
        We extend $c_i$ to all the edges of type $v_iv_j$ with $j>i$ while keeping the invariant. 
        For that, we partition the set of all colours into two sets $A$ and $B$,
        where \(A\) consists of the colours appearing on an edge incident to $v_i$.
        Let \(X\) be the set of neighbors of \(v_i\)
        whose edge to \(v_i\) is still uncoloured,
        and let \(H\) be the auxiliary \((A\cup B, X)\)-bipartite graph 
        in which \(x\in X\) is adjacent to \(u\in A\cup B\)
        if no edge incident to \(x\) is coloured with colour \(u\).
        Since \(G\) is \((d+1)\)-degenerate,
        we have \(d_H(x) \geq |A\cup B| - d\) for every \(x\in X\).
        By Lemma~\ref{lemma:star-covering},
        \(X\) can be covered (in \(H\)) by vertex disjoint \(0_k\)-stars centered in \(A\) and \(1_k\)-stars centered in \(B\).
        Such stars induce the desired colouring.
    \end{proof} 

\paragraph{Putting everything together}
We can now show how Lemmas~\ref{lem: k-div} and~\ref{lem:deg} imply Theorem \ref{th:main}. 

\begin{proof}[Proof of Theorem~\ref{th:main}]
	Let $H$ be a maximal \equivgraph-subgraph of \(G\), 
    and let $G'=(V(G), E(G)\setminus E(H))$. 
    Note that if there is an edge \(e\) joining two vertices in $V(G)\setminus V(H)$, then $H'=H+e$ is a \equivgraph-subgraph of \(G\) with more edges than \(H\), a contradiction to the maximality of \(H\). 
    Therefore, $V(G)\setminus V(H)$ is an independent set in $G'$. 
    Similarly, the graph $G'[V(H)]$ does not contain a non-empty $k$-divisible subgraph,
    otherwise, one could add the edges of such a subgraph to $H$ and obtain a larger \equivgraph-subgraph of \(G\).
    Lastly, it is easy to see that every vertex in $V(H)$ has at most $k-1$ neighbours in $V(G)\setminus V(H)$,
    otherwise we could add \(k\) of these edges to \(H\) 
    and obtain a larger \equivgraph-subgraph of \(G\).

    We can now describe a \(\chi'_k\)-colouring of $G$. 
    We colour \(H\) with colour \(1\).
    We distinguish two cases according to the parity of $k$.
    First, suppose \(k\) is odd. 
    By Lemma \ref{lem: k-div}, every subgraph $H'\subseteq G'[V(H)]$ has at most $(q(k)-1)|V(H')|$ edges, and hence has minimum degree at most $2(q(k)-1)$. 
    Consequently, every subgraph of \(G'\) has minimum degree at most $d = 2(q(k)-1)+k-1 = 2q(k) + k -3$,
    and hence $G'$ is $(2q(k)+k-3)$-degenerate.
   By Lemma~\ref{lem:deg} with \(a = \lfloor\sqrt{d}\rfloor\), we have 
    $$\chi'_k(G)\leq 2(2q(k)+k-3) + k + 2\lfloor\sqrt{d}\rfloor + 4\leq 7k+f(k),$$
    where we used that $q(k)\leq k+k^{0.6}$.

   The case where \(k\) is even is analogous. 
   However, due to the difference between the bounds given by Lemma~\ref{lem: k-div},
   in this case, $G'$ is $(2q(\frac{3k}{2})+k-3)$-degenerate. Consequently, $\chi'_k(G)\leq 9k+f(k)$. This concludes the proof. 
\end{proof}

\section{Concluding remarks}
It is natural to ask whether the current approach can be further refined to yield a better constant. In what follows, we explore this possibility and highlight the main obstacles that must be overcome. Two types of improvements are possible without departing from the overall strategy.  The first involves tightening the upper bound on the number of edges in a graph on \(n\) vertices with no \(k\)-divisible subgraph,
i.e., making further progress towards the Alon--Friedland--Kalai conjecture (Conjecture \ref{conj:alonfriedlandkalai}).  The second concerns improving the bounds on the number of colours needed for \(\chi'_k\)-colourings of $d$-degenerate graphs. 

Making progress toward the Alon--Friedland--Kalai conjecture appears challenging, but it remains a compelling direction with implications that extend well beyond \(\chi'_k\)-colourings. From a quantitative perspective, a proof of the Alon--Friedland--Kalai Conjecture would, even without any further refinements to the argument, immediately yield the bound $\chi'_k(G)\leq 7k+o(k)$ for all $k$. 

While improving the number of colours required for \(\chi'_k\)-colourings of $d$-degenerate graphs may seem more tractable, it nonetheless appears to demand new ideas, as we discuss next.
Let $G$ be a $d$-degenerate graph and let $v_1, \ldots, v_n$ be an ordering of its vertices such that, for every $ 1 \leq i \leq n$, the vertex $v_i$ has at most $d$ neighbours in $\{v_1,\ldots,v_{i-1}\}$. All \(\chi'_k\)-colourings of $G$ considered so far have been constructed recursively while maintaining the following invariant: at the beginning of step $i$, all edges incident to a vertex $v_j$ with $j<i$ are coloured so that each colour class induces a \equivgraph-graph, and no other edge is coloured. 
Moreover, all coloured edges incident to \(v_i\) are coloured with distinct colours.
We show that under such a colouring strategy, no substantial improvement in the number of colours can be achieved. To this end, we prove that Lemma~\ref{lemma:star-covering} is tight up to the~\(\sqrt{d}\) term. 
\begin{lemma}
     Let \(k\) and \(d\) be positive integers,
    and let \(A\) and \(B\) be disjoint sets of vertices such that
    \(|A| = d+1\) and \(|A| + |B| = 2d + k - 1\).
    Let \(B^*\subseteq B\) be a set of size \(d\),
    and let \(G\) be the \((A\cup B, X)\)-bipartite graph
    such that \(x\) is adjacent to every vertex in \(A\cup (B\setminus B^*)\)
    for every \(x\in X\).
    If \(|X| \equiv k-1\pmod{k}\),
    then \(G\) has no covering of \(X\) 
    by vertex-disjoint stars \(S_1,\ldots,S_r, S'_1,\ldots,S'_t\)
    such that \(S_i\) is a \(0_k\)-star centred in \(A\) for every \(i\in [r]\),
    and \(S'_i\) is an \(1_k\)-star centred in \(B\) for every~\(i\in [t]\).   
\end{lemma}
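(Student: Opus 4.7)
The plan is to extract a simple residue-counting obstruction from the very limited number of admissible star centres in $B$. First I would unpack the cardinalities: from $|A|+|B|=2d+k-1$ and $|A|=d+1$ we get $|B|=d+k-2$, so $|B\setminus B^*|=k-2$. Since every $x\in X$ has neighbourhood exactly $A\cup(B\setminus B^*)$, any star of the alleged covering that has its centre in $B$ must in fact have its centre in $B\setminus B^*$. The vertex-disjointness of the covering then forces the number $t$ of $1_k$-stars in it to satisfy
\[
t \;\leq\; |B\setminus B^*| \;=\; k-2.
\]

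Next I would count leaves modulo $k$. A $0_k$-star centred in $A$ contributes a multiple of $k$ leaves to $X$, whereas a $1_k$-star centred in $B\setminus B^*$ contributes $1\pmod k$ leaves. Summing over all stars of a purported covering,
\[
|X| \;=\; \sum_i e(S_i) \,+\, \sum_j e(S'_j) \;\equiv\; t \pmod{k}.
\]
The hypothesis $|X|\equiv k-1\pmod k$ then forces $t\equiv k-1\pmod k$, and since $t\geq 0$ this yields $t\geq k-1$, directly contradicting the bound $t\leq k-2$ obtained above.

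I do not anticipate a substantive obstacle here: once one observes that the covering has access to only $k-2$ potential $B$-centres, and that each such star contributes exactly $1$ to $|X|\pmod k$, the statement reduces to a one-line pigeonhole argument. The only point worth verifying carefully is that a star ``centred in $B$'' really cannot be centred at a vertex of $B^*$, which is immediate from the prescribed neighbourhood $A\cup(B\setminus B^*)$ of every $x\in X$. This also explains why the bound of Lemma~\ref{lemma:star-covering} cannot be tightened by more than the $\sqrt{d}$-type slack: the $d+a$ auxiliary centres in $A'\cap B$ are precisely what allow one to absorb the residue of $|X|$ modulo $k$, and shrinking this reservoir below $k-1$ makes examples of the above shape unavoidable.
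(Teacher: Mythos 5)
Your proof is correct and follows essentially the same route as the paper's: the congruence $|X|\equiv t\pmod k$ forces $t\geq k-1$, while the fact that every $1_k$-star must be centred in $B\setminus B^*$ (of size $k-2$) forces $t\leq k-2$. No issues.
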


\begin{proof}
    Suppose that there is a covering \(S_1,\ldots,S_r, S'_1,\ldots,S'_t\) of \(X\)
    as above.
    Given a star \(S\) let \(l(S)\) be the number of leaves of \(S\).
    Then we have
    \(
        |X| = \sum_{i=1}^r l(S_i) + \sum_{i=1}^t l(S'_i)
    \),
    and hence, since \(S_i\) is a \(0_k\)-star for every \(i\in[r]\)
    and \(S'_i\) is a \(1_k\)-star for every \(i\in[t]\),
    \[
        k-1 \equiv |X| \equiv \sum_{i=1}^r l(S_i) + \sum_{i=1}^t l(S'_i) \equiv t,
    \]
    where the equivalences were taken modulo \(k\).
    Now, since no vertex of \(x\) is adjacent to a vertex of~\(B^*\),
    then \(S'_i\) is centred at \(B\setminus B^*\)
    for every \(i\in [t]\),
    and hence 
    \[
        k-1 \leq t\leq |B\setminus B^*| = |B| - |B^*| = 2d + k - 1 - |A| - |B^*| =  k-2,
    \]
    a contradiction.
\end{proof}

In what follows, we propose two conjectures which, if true, would enable a further reduction of the multiplicative constant in the upper bounds in Theorem \ref{th:main}, as discussed below.

\begin{conjecture}\label{conj:divisible}
Let \(k\) be a positive integer.
For every $0_k$-graph $G$ we have $\chi'_k(G) \leq k +o(k)$.
\end{conjecture}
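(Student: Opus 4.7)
The plan is to exhibit, for every $0_k$-graph $G$, a spanning subgraph $F \subseteq G$ whose degree is congruent to $k-1$ modulo $k$ at every vertex and whose maximum degree is $k-1+o(k)$. Given such an $F$, the complement $G - F$ satisfies $d_{G-F}(v) \equiv -(k-1) \equiv 1 \pmod k$ at every vertex of positive degree, so $G - F$ is itself a $1_k$-graph and constitutes a single colour class. Applying Vizing's theorem to $F$ yields a proper edge colouring with at most $\Delta(F) + 1 \leq k + o(k)$ colour classes, each of which is a matching and therefore a trivial $1_k$-graph. Combining these two decompositions produces a $\chi'_k$-colouring of $G$ with at most $k + o(k)$ colours, as required.

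The principal candidate for $F$ is a $(k-1)$-regular spanning subgraph of $G$, whose existence is governed by Tutte's $f$-factor theorem. The necessary parity condition $(k-1)|V(G)| \equiv 0 \pmod 2$ holds automatically whenever $k$ is odd or $|V(G)|$ is even, so the main task reduces to verifying Tutte's structural condition. Here I would exploit the $k$-divisibility of $G$, combined with classical results guaranteeing regular subgraphs in regular graphs, such as the theorems of Tashkinov and Kano (Lemmas \ref{lemma:Tashkinov-1} and \ref{lemma:kano-1} in the excerpt), to establish the existence of $F$. When $k$ is even and $|V(G)|$ is odd, no $(k-1)$-regular spanning subgraph exists, and I would relax the definition of $F$ so that a small defect set $S \subseteq V(G)$ is allowed to carry $d_F(v) = 2k-1$ instead of $k-1$, still preserving $d_F(v) \equiv k-1 \pmod k$, and then absorb the extra edges incident to $S$ into $o(k)$ additional colour classes.

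The main obstacle lies in verifying Tutte's condition in full generality and in controlling the maximum degree of $F$ so that $\Delta(F) \leq k-1 + o(k)$ even when defect vertices are present. The absorption step for the defect set appears to require a Hall-type argument in the spirit of Lemma \ref{lemma:star-covering} of the present paper, applied after the extraction of $F$. In short, a proof of Conjecture \ref{conj:divisible} would likely need both a new structural result on the existence of near-$(k-1)$-regular spanning subgraphs of $k$-divisible graphs and a careful scheme to absorb parity-type obstructions without inflating the total number of colour classes. Neither ingredient is directly supplied by the techniques of the present paper, so genuinely new ideas seem to be necessary.
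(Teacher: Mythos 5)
This statement is Conjecture~\ref{conj:divisible}, which the paper leaves \emph{open}: the authors state it precisely because they cannot prove it, and no proof of it appears anywhere in the text. Your submission is, by your own admission, a plan rather than a proof, so there is nothing to certify as correct; but beyond that, the plan has a concrete structural gap that goes further than the difficulties you flag. The entire reduction rests on the existence of a spanning subgraph $F$ with $d_F(v)\equiv k-1\pmod{k}$ at every vertex and $\Delta(F)\le k-1+o(k)$. Such an $F$ simply need not exist. Take $G$ to be any $k$-regular graph (a $0_k$-graph). Since $0\le d_F(v)\le d_G(v)=k$ and $0\not\equiv k-1\pmod k$ for $k\ge 2$, the only admissible degree is $d_F(v)=k-1$; in particular your fallback value $2k-1$ exceeds the maximum degree and is unavailable. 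So $F$ must be exactly $(k-1)$-regular and spanning, i.e.\ $G$ minus a perfect matching. For $k$ even, $K_{k+1}$ is a $0_k$-graph on an odd number of vertices, so no such $F$ exists by a parity count of edge-endpoints; for odd $k\ge 3$ there are connected $k$-regular graphs with no perfect matching (Sylvester-type constructions with bridges), so again no such $F$ exists. In all these cases neither Tutte's theorem nor any absorption scheme can rescue the construction, because the target object is nonexistent rather than merely hard to find.

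It is also worth noting that if your reduction did go through, it would yield $\chi'_k(G)\le k+O(1)$ for every $0_k$-graph via a single large colour class plus Vizing on $F$ --- a conclusion substantially stronger than the conjecture and than anything currently known (the paper only establishes exact values of this flavour for random graphs). That strength is itself a warning sign: the existence of a near-$(k-1)$-regular mod-$k$ factor in an arbitrary $k$-divisible graph is essentially a strong factor theorem, and the examples above show it is false as stated. A correct attack on Conjecture~\ref{conj:divisible} would have to handle $0_k$-graphs for which every $1_k$-colour class is forced to be a matching (such as $K_{k+1}$ for even $k$) by an entirely different decomposition, e.g.\ falling back on proper edge colouring, and would need a criterion for when the factor-based route applies. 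Neither ingredient is present in your sketch or in the paper.
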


If Conjecture \ref{conj:divisible} holds, we could adapt the proof of Theorem \ref{th:main} as follows. 
We suppose that $k$ is odd, since the case $k$ is even is analogous. 
Let \(H\) be a maximal \(0_k\)-subgraph of \(G\)
and consider the subgraph $G'=(V(G), E(G)\setminus E(H))$. 
This graph does not contain any non-empty $k$-divisible subgraph, 
and hence $G'$ is $(2q(k)-2)$-degenerate (while in the proof of Theorem~\ref{th:main}, \(G'\) is \((2q(k) + k - 3)\)-degenerate). 
Now, we can use Conjecture \ref{conj:divisible} to colour \(H\) using $\chi'_k(H) \leq k +o(k)$ colours,
and Lemma~\ref{lem:deg}
to colour \(G'\) with \(\chi'_k(G')\leq 5k + o(k)\) colours,
from which we would require only $6k+o(k)$ colours to colour \(G\). 
The bound for $k$ even would instead be $8k+o(k)$.

\begin{conjecture}\label{conj:primedivisible}
Let \(k\) be a positive integer. For every $0_{q(k)}$-graph $G$ we have $\chi'_k(G) \leq 3k +o(k)$.
\end{conjecture}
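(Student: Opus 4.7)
The strategy is to mirror the proof of Theorem~\ref{th:main} while fully exploiting the $0_{q(k)}$-hypothesis on $G$, which is extra information that the main theorem does not use. As a first step, I would extract a maximal $1_k$-subgraph $H$ of $G$, colour it with a single colour, and set $G' = G - E(H)$. By exactly the same arguments as in the proof of Theorem~\ref{th:main}, the set $V(G)\setminus V(H)$ is independent in $G'$, the subgraph $G'[V(H)]$ contains no non-empty $k$-divisible subgraph, and every vertex of $V(H)$ has at most $k-1$ neighbours in $V(G)\setminus V(H)$. Lemma~\ref{lem: k-div} combined with the Baker--Harman--Pintz estimate then yields degeneracy at most $2q(k) + k - 3 = 3k + o(k)$ for $G'$, which via Lemma~\ref{lem:deg} gives only the already-known bound $\chi'_k(G) \leq 7k + o(k)$. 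To reach $3k + o(k)$, I would need to drive the degeneracy of $G'$ down to $k + o(k)$.

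The tool for achieving this reduction is the congruence $d_G(v)\equiv 0\pmod{q(k)}$, which, combined with $d_H(v)\equiv 1\pmod k$ for $v\in V(H)$, forces a rigid residue pattern on $G'$: residual degrees in $V(H)$ are constrained modulo $\gcd(k, q(k))$, while vertices in $V(G)\setminus V(H)$ must have residual degrees divisible by $q(k)\geq k$. I would use these joint constraints to prove two quantitative strengthenings. First, an Alon--Friedland--Kalai--type bound showing $e(G'[V(H)]) \leq (k/2 + o(k))|V(H)|$, halving the contribution to the degeneracy coming from within $V(H)$. Second, a sharpening of the cross-edge bound from $k-1$ down to $o(k)$ per vertex of $V(H)$, exploiting that vertices of $V(G)\setminus V(H)$ already have large $q(k)$-divisible degrees absorbing most of their edges. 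With both refinements in place, $G'$ would have degeneracy at most $k + o(k)$, and applying Lemma~\ref{lem:deg} with $a = \lfloor\sqrt{d}\rfloor$ would yield $\chi'_k(G') \leq 3k + o(k)$, hence $\chi'_k(G)\leq 3k+o(k)$ in total.

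The main obstacle is the first strengthening, which asks for a factor-of-two improvement over Lemma~\ref{lem: k-div} in the presence of the additional $q(k)$-divisibility hypothesis. The current proof of Lemma~\ref{lem: k-div} reduces the problem to Theorem~\ref{thm:alonfriedlandkalai} via vertex splitting followed by the regular-subgraph theorems of Tashkinov and Kano; this pipeline does not distinguish arbitrary graphs from $q(k)$-divisible ones, and consequently cannot produce the extra savings required. A genuinely new extremal result, of the form \emph{``every $q(k)$-divisible graph free of non-empty $k$-divisible subgraphs has at most $(k/2 + o(k))\,n$ edges''}, appears to be necessary, and such a result would itself constitute a substantial advance in the extremal theory of divisibility in graphs, comparable in flavour to the Alon--Friedland--Kalai conjecture (Conjecture~\ref{conj:alonfriedlandkalai}). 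Absent such an input, any alternative approach would likely have to abandon the maximal-$1_k$-subgraph strategy altogether in favour of a direct inductive or decomposition argument tailored to $q(k)$-divisible graphs.
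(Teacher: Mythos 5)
This statement is Conjecture~\ref{conj:primedivisible}: it is an \emph{open problem} in the paper, stated only so that its consequences can be discussed (the authors show it would yield an $8k+o(k)$ bound for general graphs), and the paper contains no proof of it. Your proposal does not close that gap: it is a research plan that reduces the conjecture to two further unproven statements, and you yourself concede that the first of these --- that every graph free of non-empty $k$-divisible subgraphs arising in your decomposition has at most $(k/2+o(1))kn$ edges, i.e.\ a bound \emph{below} the tight $(k-1)n$ of the Alon--Friedland--Kalai conjecture for the relevant subclass --- ``appears to be necessary'' and would be ``a substantial advance.'' A proof that ends by requiring a new extremal theorem stronger than Conjecture~\ref{conj:alonfriedlandkalai} is not a proof.

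Beyond that structural issue, the specific leverage you hope to extract from the $0_{q(k)}$-hypothesis largely evaporates on inspection. Since $q(k)$ is prime and $q(k)\geq k$, we have $\gcd(k,q(k))=1$ whenever $k$ is not itself prime, so the ``rigid residue pattern'' on residual degrees in $V(H)$ is the trivial constraint modulo $1$. Moreover, the divisibility $d_G(v)\equiv 0\pmod{q(k)}$ does not descend to any clean divisibility property of $G'[V(H)]$: the degrees of $H$ are controlled only modulo $k$ (not exactly), and the cross-edges to $V(G)\setminus V(H)$ perturb the residues further, so $G'[V(H)]$ is essentially an arbitrary graph with no non-empty $k$-divisible subgraph, for which nothing better than Lemma~\ref{lem: k-div} is available. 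Your second refinement --- reducing the cross-edge bound from $k-1$ to $o(k)$ per vertex of $V(H)$ --- is asserted rather than argued: the fact that a non-isolated vertex of $V(G)\setminus V(H)$ sends all of its $\geq q(k)$ edges into $V(H)$ says nothing about how many such edges land on any single vertex of $V(H)$, and the maximality argument that gives $k-1$ does not improve under the $q(k)$-divisibility hypothesis. As it stands, your argument establishes only the $7k+o(k)$ bound already implied by Theorem~\ref{th:main}, and the statement remains a conjecture.
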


If Conjecture \ref{conj:primedivisible} holds, we could adapt the proof of Theorem \ref{th:main} as follows. 
The overall strategy is similar to discussion above. 
Namely, we would identify a maximal $0_{q(k)}$-graph $H$ to be coloured with $\chi'_k(H)\leq3k + o(k)$ colours (using Conjecture~\ref{conj:primedivisible}), and consider the subgraph $G'=(V(G), E(G)\setminus E(H))$.  
This graph does not contain any non-empty $q(k)$-divisible subgraph, therefore, using Theorem \ref{thm:alonfriedlandkalai} together with the same strategy as above, we would show that $G'$ is $(2q(k)-2)$-degenerate, and by Lemma \ref{lem:deg}, we could colour it using $5k+o(k)$ colours. Therefore, we would require $8k+o(k)$ colours to colour \(G\), independently of the parity of $k$. Clearly for $k$ odd, we could simply repeat the reasoning of our proof and obtain a better bound.

\section*{Acknowledgements}
We sincerely thank the editor and reviewers for their time and constructive feedback, which has helped us improve the manuscript. 

This paper is the merge of two independent projects, one of which (involving GB, MB, GC, AD and FG) was initiated as a ``Groupe de recherche JGA'' in Dijon 2024.

This research has been partially supported by
  Coordenação de Aperfeiçoamento  de Pessoal de Nível Superior - Brasil -- CAPES -- Finance Code 001. 
  F.~Botler is supported by CNPq (304315/2022-2), CAPES (88881.973147/2024-01), and FAPESP (2024/14906-6 and  2023/03167-5).  
L.~Colucci is supported by FAPESP (2020/08252-2).  
P. M. V. Neto is supported by FAPESP (2025/13056-1 and 2025/24104-7)
FAPESP is the S\~ao Paulo Research Foundation.  CNPq is the National Council for Scientific and Technological Development of Brazil. M. Bonamy is supported by the ANR project ENEDISC (ANR-24-CE48-7768-01). G. Carenini is supported by the CB European PhD Studentship funded by Trinity College Cambridge. A.~Dumas is supported by the National Research Agency (ANR), France project P-GASE (ANR-21-CE48-0001). F.~Ghasemi is supported by ANR project DIFFERENCE (https://anr.fr/Projet-ANR-20-CE48-0002) and by IUT Sénart-Fontainebleau.

\bibliographystyle{siamplain}
\bibliography{ref}

\end{document}